\newtheorem{thm}{Theorem}
\newtheorem{biog}{}
\newtheorem{affil}{}
\newtheorem{lemma}{Lemma}
\newtheorem{prop}{Proposition}
\theoremstyle{definition}
\newtheorem{definition}{Definition}
\newtheorem{remark}{Remark}
\begin{document}

\title{How Many Real  Attractive Fixed Points Can A Polynomial Have?}

\author{Terence Coelho\thanks{Undergraduate Math and CS Major,  Rutgers University (terencejcoelho96@gmail.com)} ~and~ Bahman Kalantari\thanks{Professor of Computer Science, Rutgers University (kalantari@cs.rutgers.edu)}}
\date{}
\maketitle
\begin{abstract}
We prove a complex polynomial of degree $n$ has at most $\lceil n/2 \rceil$  attractive fixed points lying on a line. We also consider  the general case.
\end{abstract}

{\bf Keywords:}    Complex Polynomials; Fixed Points; Fixed Point Iteration;  Dynamical Systems


\section{Introduction.} While the notion of roots of  a quadratic is rudimentary in K-12 math, that of its {\it fixed points} is uncommon.  This is surprising because the relevance of the fixed points of a quadratic can be demonstrated easily via iterative methods for approximation of such numbers as $\sqrt{2}$, when the quadratic formula offers no remedy. In fact, in the context of fixed  points one can also give a formal definition of the derivative of a quadratic and see its application in action. More precisely,  if $\theta$ is a zero of a quadratic $q(x)$, then it is a fixed points of  $p(x)=cq(x)+x$, where $c$ is any nonzero constant.  A fixed point  $\theta$ of  $p(x)=a_2x^2+a_1x+a_0$ is {\it attractive} if its {\it multiplier}, $p'(\theta)=2a_2 \theta + a_1$, has absolute value (or modulus in case of complex multiplier) less than one. It is straightforward to show the {\it fixed point iteration} $x_{k+1}=p(x_k)$ converges to $\theta$ starting with any {\it seed} $x_0$ that is sufficiently close to $\theta$.
Specifically,  $p(x_k)- p(\theta)= (x_k- \theta) (p'(x_k)+p'(\theta))/2$.  Thus, when $x_k$ is close to $\theta$, $|x_{k+1}- \theta|$ is approximately  $|p'(\theta)| \cdot |x_k-\theta|$.  Such analysis can  also help in the development of Newton's method and the study of its rate of convergence.  These observations on a quadratic extend to an arbitrary degree polynomial $p(x)$ with the only modification that $p(x_k)- p(\theta)=(x_k-\theta)r(x_k)$, where $r(x)$ is a polynomial that can be shown to satisfy $r(\theta)= p'(\theta)$.  Thus the local convergence of the fixed point iteration to an attractive fixed point follows for any degree polynomial, with real of or complex coefficients.

A natural questions arises: Can all the fixed points of a complex polynomial be attractive?  Anyone familiar with Julia sets  of iterations of $z^2+c$, popularized by Mandelbrot \cite{Man} and the famous set that bears his name, must have noticed that not both fixed points can be attractive.  Surprisingly, despite the existence of many books on fractals, this aspect of iterations of a polynomial does not seem to be emphasized.  The study of this question is  one of the most basic attempts in the understanding of the dynamics of iterations of polynomials and rational functions (see e.g. Beardon \cite{Bea}, Milnor \cite{Mil}), as well as in the study of polynomial root-finding methods,  their fractal behavior and visualization of iterative methods (see e.g. Devaney \cite{Dev}, Mandelbrot \cite{Man}, and \cite{Kal}).

In this note we prove at most $\lceil n/2 \rceil$  attractive fixed points of a complex polynomial of degree $n$ can lie on a  line. In particular, this bounds the number of real attractive fixed points. For each $n >1$, we exhibit a polynomial of degree $n$  with $n-1$  attractive fixed points. Via a known result but a nontrivial proof, $n-1$ is also the maximum number of attractive fixed points for any polynomial of degree $n >1$. In fact we conjecture a stronger result ({\it real unattractive fixed point conjecture}): Any polynomial of degree $n>1$ has a fixed point where the real part of its multiplier is at least one.  We prove it for $n=2,3$.

\section{Bound On Attractive Fixed Points}

\begin{thm} \label{thm1} If $p(z)$ is a complex polynomial having $n$  attractive fixed points that lie on a line, then its degree is at least $2n-1$.
\end{thm}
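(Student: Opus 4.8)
The plan is to throw away almost all of the attractiveness hypothesis and keep only its real-part consequence, then count sign changes of a single real polynomial along the given line. First I would set $q(z) = p(z) - z$, so that the fixed points $\theta_1,\dots,\theta_n$ are exactly roots of $q$, and the multiplier at $\theta_j$ is $p'(\theta_j)$. Attractiveness says $|p'(\theta_j)| < 1$, which I will use only through the weak consequence $\mathrm{Re}\,p'(\theta_j) \le |p'(\theta_j)| < 1$, equivalently $\mathrm{Re}\,q'(\theta_j) = \mathrm{Re}\,p'(\theta_j) - 1 < 0$. (One may assume $\deg p \ge 2$, since for $\deg p \le 1$ there is at most one fixed point and the bound $2n-1 \le 1$ is immediate.)

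Next I would normalize the line to be the real axis. Writing it as $\{z_0 + t e^{i\alpha} : t \in \mathbb{R}\}$ and conjugating by the affine map $\phi(z) = e^{-i\alpha}(z - z_0)$, the degree of $p$ is unchanged, the fixed points are carried to the real numbers $\phi(\theta_j)$, and the multiplier at each fixed point is preserved: for affine $\phi$ the chain rule gives $(\phi \circ p \circ \phi^{-1})'(\phi(\theta)) = \phi'(\theta)\,p'(\theta)\,(\phi^{-1})'(\phi(\theta)) = p'(\theta)$. Hence I may assume $\theta_1 < \cdots < \theta_n$ are real and that $\mathrm{Re}\,q'(\theta_j) < 0$ still holds.

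The heart of the argument is a real sign-change count. Restricting to real $x$ and writing $u(x) = \mathrm{Re}\,q(x)$, the function $u$ is a real polynomial with $\deg u \le \deg q = \deg p$, and $u'(x) = \mathrm{Re}\,q'(x)$ along the real axis. At each $\theta_j$ we have $u(\theta_j) = 0$ and $u'(\theta_j) = \mathrm{Re}\,q'(\theta_j) < 0$, so every $\theta_j$ is a \emph{simple} root through which $u$ is strictly decreasing; in particular $u \not\equiv 0$. It follows that $u$ is negative just to the right of $\theta_j$ and positive just to the left of $\theta_{j+1}$, so by the intermediate value theorem $u$ acquires at least one further root in each open interval $(\theta_j,\theta_{j+1})$. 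These $n-1$ interior roots are distinct from the $n$ points $\theta_j$, giving $u$ at least $2n-1$ distinct real roots and therefore $\deg p \ge \deg u \ge 2n-1$.

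I expect the genuine content to lie entirely in the observation that one should project onto the real direction of the line, and that the resulting forced sign change of $\mathrm{Re}(p(x)-x)$ manufactures an extra root between consecutive fixed points; once this is seen, the bound $2n-1$ falls out at once. The remaining work is bookkeeping rather than conceptual: checking that the affine conjugacy transports both ``attractive'' and ``on a line'' correctly (via invariance of the multiplier and the degree), and confirming that the interpolated roots are strictly interior to each gap and hence distinct from the $\theta_j$. As an alternative to conjugating, one can argue in place with $F(t) = \mathrm{Re}\bigl(e^{-i\alpha}\, q(z_0 + t e^{i\alpha})\bigr)$, whose derivative is $\mathrm{Re}\,q'$ along the line, so the same sign count applies verbatim.
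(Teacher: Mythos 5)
Your proof is correct, and it takes a genuinely different --- and considerably more elementary --- route than the paper's. The paper builds the Hermite interpolant $H$ with $H(z_i)=z_i$, $H'(z_i)=\alpha_i$, proves by an induction on divided differences the closed form $f[z_1,z_1,\dots,z_n,z_n]=\sum_{i=1}^n(\alpha_i-1)/\pi^n_i$ with $\pi^n_i=\prod_{j\ne i}(z_i-z_j)^2$, uses collinearity to give all the $\pi^n_i$ a common phase so that $\mathrm{Re}(\alpha_i-1)<0$ forces this leading coefficient to be nonzero (hence $\deg H=2n-1$ exactly), and finally derives a contradiction from the divisibility of $H-p$ by $\prod_{i=1}^n(z-z_i)^2$ when $\deg p<2n-1$. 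You instead project onto the line: after an affine conjugation (whose preservation of degree and multipliers you correctly verify), $u(x)=\mathrm{Re}\bigl(p(x)-x\bigr)$ is a real polynomial, not identically zero, with a simple strictly descending zero at each $\theta_j$, and the intermediate value theorem manufactures an additional zero in each of the $n-1$ gaps, giving $2n-1$ distinct real roots and hence $\deg p\ge\deg u\ge 2n-1$; your in-place variant $F(t)=\mathrm{Re}\bigl(e^{-i\alpha}\,q(z_0+te^{i\alpha})\bigr)$ is even cleaner since it avoids conjugating at all. It is worth noting that both arguments use attractiveness only through $\mathrm{Re}\,\alpha_i<1$ --- the paper via ``$(\alpha_i-1)$ has a negative real part,'' you explicitly --- so both in fact prove the stronger statement for fixed points whose multipliers merely have real part less than one, which dovetails with the paper's ``real unattractive fixed point conjecture''; your proof makes this visible at a glance, and trades the divided-difference machinery (which yields, as a by-product, the exact leading coefficient of the Hermite interpolant) for a two-line sign count. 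One pedantic caveat: your dismissal of $\deg p\le 1$ silently excludes constant $p$, for which the theorem as literally stated fails (a constant has one superattractive fixed point but degree $0$); since the paper makes the same implicit nonconstancy assumption, this is a defect of the statement rather than of your argument, but you should say ``nonconstant'' rather than claim the bound is immediate for $\deg p\le 1$.
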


\begin{proof} Let the $n$ distinct attractive fixed points be $z_1, \dots, z_n$ and $\alpha_i = p'(z_i)$, $\forall i$.  Consider the Hermite interpolating polynomial $H(z)$,  i.e. the least degree polynomial  satisfying  $H(z_i)=z_i$,  $H'(z_i)=\alpha_i$,  $\forall i=1, \dots, n$.
Its degree is at most $2n-1$ and it can be written explicitly via the {\it divided differences}, see \cite{Bur}.  We prove the degree of $H(z)$ is $2n-1$, and the degree of $p(z)$ is at least $2n-1$.

Call a list of  indices $i_1, i_2, \dots, i_k$ with $i_j \in \{1, \cdots, n\}$  {\it valid} if any two identical indices appear as adjacent elements.  The divided differences are defined as follows: Set $f[z_i]=p(z_i)=z_i$ and  $f[z_i,z_i]=p'(z_i)$. Recursively define
\begin{equation} \label{eq0}
f[z_{i_1}, \dots, z_{i_k}]=({f[z_{i_2}, \dots, z_{i_k}] - f[z_{i_1}, \dots, z_{i_{k-1}}]})/({z_{i_k}-z_{i_1}}).
\end{equation}

Thus for $i \not = j$, $f[z_i, z_j]=1$, $f[z_i,z_i,z_j]= (\alpha_i-1)/(z_i-z_j)$, $f[z_i,z_i,z_j,z_j]= ( \alpha_i+\alpha_j -2)/(z_i-z_j)^2$, and $f[z_i,z_i, z_j,z_j, z_k]=$ $(\alpha_i-1)/{(z_i-z_j)^2(z_i-z_k)}+$ $ (\alpha_j-1)/{(z_i-z_j)^2(z_j-z_k)}$.
The Hermite interpolating polynomial is then
$$H(z)= f[z_1]+f[z_1,z_1](z-z_1)+ f[z_1,z_1,z_2](z-z_1)^2+ \cdots+$$
$$f[z_1,z_1,\dots, z_{n-1},z_{n-1}, z_n]\prod_{i=1}^{n-1}(z-z_i)^2+ f[z_1,z_1, \dots, z_n,z_n] (z-z_n) \prod_{i=1}^{n-1}(z-z_i)^2.$$
We will prove  $f[z_1,z_1, \dots, z_n,z_n]$, the coefficient of $z^{2n}-1$, is nonzero.  In doing so we prove by induction that for each $k \in \{2, \dots, n\}$,
\begin{equation} \label{eq1}
f[z_1,z_1, \dots, z_k,z_k]= \sum_{i=1}^k \frac{(\alpha_i-1)}{\pi^k_i}, \quad \text{where} \quad  \pi^k_i=\prod_{j=1, j \not =i}^k (z_i-z_j)^2,
\end{equation}
and for each  $k \in \{2, \dots, n-1\}$,
\begin{equation} \label{eq2}
f[z_1,z_2,z_2, \dots, z_k,z_k, z_{k+1}]= \sum_{i=2}^k \frac{(\alpha_i-1) (z_i-z_1)(z_i-z_{k+1})}{\pi^{k+1}_i}.
\end{equation}

Note that (\ref{eq1}) and (\ref{eq2}) are true for $k=1,2$.
Assume (\ref{eq1}) and (\ref{eq2}) are true for $k$. We will prove they are true for $k+1$.
We first prove (\ref{eq1}). From the definition of divided differences (\ref{eq0}), (\ref{eq1}), (\ref{eq2}), and  that $\pi^{k+1}_i=\pi^k_i(z_i-z_{k+1})^2$, we get
$$
f[z_1, z_1, \dots, z_k,z_k, z_{k+1}]=  \sum_{i=2}^k \frac{(\alpha_i-1) (z_i-z_1)(z_i-z_{k+1})}{(z_{k+1}-z_1)\pi^{k+1}_i} -
$$
\begin{equation} \label{eq3}
\sum_{i=1}^k \frac{(\alpha_i-1) (z_i-z_{k+1})^2}{(z_{k+1}-z_1)\pi^{k+1}_i}=\sum_{i=1}^k \frac{(\alpha_i-1) (z_i-z_{k+1})}{ \pi^{k+1}_i}.
\end{equation}
From  (\ref{eq3}), symmetry and  invariance under valid permutations, we also get
\begin{equation} \label{eq4}
f[z_1, z_2,z_2,  \dots,z_{k+1}, z_{k+1}]= \sum_{i=2}^{k+1} {(\alpha_i-1)(z_i-z_1)}/{\pi^{k+1}_i}.
\end{equation}
Subtracting the left-hand-side of (\ref{eq3}) from the corresponding side of (\ref{eq4}), dividing by $(z_{k+1}-z_1)$ and simplifying the right-hand-side we get (\ref{eq1}) for $k+1$.  Next we prove (\ref{eq2}) for $k+1$.  From (\ref{eq3}) we may also write,

\begin{equation} \label{eq5}
f[z_2,z_2,  \dots,z_{k+1}, z_{k+1}, z_{k+2}]= \sum_{i=2}^{k+1} {(\alpha_i-1)(z_i- z_{k+2}) }/{\pi^{k+1}_i}.
\end{equation}
Subtracting  (\ref{eq4}) from (\ref{eq5}), dividing by $(z_{k+2}-z_1)$ and simplifying, we get (\ref{eq2}) for $k+1$. Thus  (\ref{eq1}) holds for $k=n$.  Suppose for all $i=1, \dots, n$,  $z_i=x_i + y_i \sqrt{-1}$ lies on the same line, say $y=mx+b$. Then $z_i-z_j=(x_i-x_j)(1+m \sqrt{-1})$.  Then from (\ref{eq1}) it follows that $\pi^n_i = r_i c$, where $r_i$ is some positive real number and $c=(1+m \sqrt{-1})^{n-1}$.  Also note that $(\alpha_i-1)$ has a negative real part. It follows that  $f[z_1,z_1, \dots, z_n,z_n]$ is nonzero. This also holds when $z_i$'s all lie on the $y$-axis.

Next we prove that the degree of $p(z)$ is at least $2n-1$.  Suppose otherwise. Then on the one hand $q(z)=H(z)-p(z)$ is a polynomial of degree $2n-1$. On the other hand, since $q(z_i)=q'(z_i)=0$,  $(z-z_i)^2$ must divide $q(z)$ for each $i=1, \dots, n$.  This implies the degree of $q(z)$ is at least $2n$, a contradiction.
\end{proof}

\section{Real Unattractive Fixed Point Conjecture}
Theorem \ref{thm1} does not hold when the $n$ attractive fixed points are arbitrary. Take as an example $p(z)=(-z^{n+1}+(n+1)z)/n$.  The attractive fixed points are the $n$-th roots of unity. The question arises, can all fixed points of a polynomial be attractive?   The answer is negative.  This can be proved in a nontrivial way as follows.  By the fundamental theorem of algebra $p(z)$ has $n-1$ critical points.  On the other hand the basin of attraction of each  attractive fixed point of a rational map must contain a critical point (see e.g. Theorem 5.32 in \cite{Kal}, or \cite{Bea}). Thus not all fixed points of $p(z)$ can be attractive. In fact we make a
 conjecture ({\it real unattractive fixed point conjecture}): A complex polynomial of degree $n >1$ has at least one fixed point  where the real part of its multiplier is at least one.  We prove this for $n=2,3$.  The proof is trivial for $n=2$:  We must have $p(z)=c(z-z_1)(z-z_2)+z$ for some nonzero constant $c$.  Then $p'(z_1)=c(z_1-z_2)+1$ and $p'(z_2)=c(z_2-z_1)+1$.  Since the real part of $c(z_1-z_2)$ and $c(z_2-z_1)$ have  oppositive signs the proof is complete. Next we consider the case where $n=3$.  We have $p(z)=c(z-z_1)(z-z_2)(z-z_3)+z$, where $c$ is a nonzero constant.  Let $a$ be a solution to $a^2=c$.  Let $\alpha_1=a(z_1-z_2)=a_1+ib_1$,  $\alpha_2=a(z_3-z_1)=a_2+ib_2$, and $\alpha_3=a(z_2-z_3)=a_3+ib_3$.  Then $\alpha_1+\alpha_2+\alpha_3=0$.
Note that the multipliers are then $\lambda_1=1- \alpha_1 \alpha_2$, $\lambda_2= 1- \alpha_2 \alpha_3$, and $\lambda_3= 1- \alpha_3 \alpha_1$.  There must exist two indices $i,j$ such that $b_ib_j \geq 0$. Assume without loss of generality $b_1b_2 \geq 0$. To prove our claim we show that the real part of at least one of $- \alpha_i \alpha_j$ is nonnegative for some $i,j$, $i \not =j$.  Otherwise,  we must have $a_1a_2 > b_1b_2$, and substituting for $\alpha_3=-(\alpha_1+\alpha_2)$, we also get,
$a_1^2-b_1^2+a_1a_2-b_1b_2 <0$,  $a_2^2-b_2^2+a_1a_2-b_1b_2 <0$.
From the three strict inequalities we get $a_1^2 < b_1^2$ and $a_2^2 < b_2^2$. But these imply $a_1^2a_2^2 <  b_1^2b_2^2$, contradicting $a_1a_2 > b_1b_2 \geq 0$. Hence the proof of claim for $n=3$.\\

\section*{Final Remarks} A complex polynomial of degree $n > 1$ can have at most $n-1$ attractive fixed points. However, here we proved it can have at most
$\lceil n/2 \rceil$ attractive fixed points lying on a line.  The question may arise if this property of polynomials can also be proved from existing results in dynamical systems.  This is not known to us, however even if possible we doubt such a proof can be established via the elementary technique given here.

In this article we have also  introduced the ``real unattractive fixed point conjecture'' and proved it for $n=2,3$. We feel it would be interesting to prove this for all $n$, and if so in an elementary fashion.

Finally, the notion of fixed point of a polynomial can be introduced in basic math courses in high school and college, helping to promote algorithmic methods for solving polynomial equations, concepts in discrete dynamical systems, as well as visualization of iterative techniques (see \cite{Kal}) which  gives rise to spectacular fractal  and non-fractal images.  These in turn will help promotes novel applications of polynomials.

\end{document}